 \newtheorem{ittheorem}{Theorem}
 \newtheorem{itlemma}{Lemma}
 \newtheorem{itproposition}{Proposition}
 \newtheorem{itdefinition}{Definition}
 \newtheorem{itremark}{Remark}
 \newtheorem{itclaim}{Claim}
 \newtheorem{itcorollary}{\bf Corollary}
 \newenvironment{theorem}{\addtocounter{equation}{1}
 \begin{ittheorem}}{\end{ittheorem}}
 \newenvironment{lemma}{\addtocounter{equation}{1}
 \begin{itlemma}}{\end{itlemma}}
 \newenvironment{proposition}{\addtocounter{equation}{1}
 \begin{itproposition}}{\end{itproposition}}
 \newenvironment{definition}{\addtocounter{equation}{1}
 \begin{itdefinition}}{\end{itdefinition}}
 \newenvironment{remark}{\addtocounter{equation}{1}
 \begin{itremark}}{\end{itremark}}
 \newenvironment{claim}{\addtocounter{equation}{1}
 \begin{itclaim}}{\end{itclaim}}
 \newenvironment{proof}{\noindent {\bf Proof.\,}
 }{\hspace*{\fill}$\qed$\medskip}
 \newenvironment{corollary}{\addtocounter{equation}{1}
 \begin{itcorollary}}{\end{itcorollary}}
 \newcommand{\be}[1]{\begin{eqnarray*}\label{#1}}
 \newcommand{\ee}{\end{eqnarray*}}
 \newcommand{\bl}[1]{\begin{lemma}\label{#1}}
 \newcommand{\el}{\end{lemma}}
 \newcommand{\br}[1]{\begin{remark}\label{#1}}
 \newcommand{\er}{\end{remark}}
 \newcommand{\bt}[1]{\begin{theorem}\label{#1}}
 \newcommand{\et}{\end{theorem}}
 \newcommand{\bd}[1]{\begin{definition}\label{#1}}
 \newcommand{\ed}{\end{definition}}
 \newcommand{\bcl}[1]{\begin{claim}\label{#1}}
 \newcommand{\ecl}{\end{claim}}
 \newcommand{\bp}[1]{\begin{proposition}\label{#1}}
 \newcommand{\ep}{\end{proposition}}
 \newcommand{\bc}[1]{\begin{corollary}\label{#1}}
 \newcommand{\ec}{\end{corollary}}
 \newcommand{\bpr}{\begin{proof}}
 \newcommand{\epr}{\end{proof}}
 \newcommand{\bi}{\begin{itemize}}
 \newcommand{\ei}{\end{itemize}}
 \newcommand{\ben}{\begin{enumerate}}
 \newcommand{\een}{\end{enumerate}}
\def\un{{1\cdots 1}}
\def\um{\{\,1,\dots,m\,\}}
\def\un{\{\,1,\dots,N\,\}}
\def\uro{\smash{{U}^{\!\!\!\!\raise5pt\hbox{$\scriptstyle o$}}}}
\def\um{\{\,1,\dots,m\,\}}
\def\bp{{\overline{p}}}
\def\bp{{\overline{p}}}
 \def \ba {\begin{array}}
 \def \ea {\end{array}}
 \def \qed {{\heartsuit\hfill}}
 \def \R {{\mathbb R}}
 \def \N {{\mathbb N}}
\def \qed {{\square\hfill}}
\def \qed {{\square\hfill}}
\def\R{{\mathbb R}}
\def\N{{\mathbb N}}
\def\eqref#1{(\ref{#1})}
\newcommand{\periodafter}[1]{#1.}
\titleformat{\section}[runin]
{\normalfont\bfseries}{\thesection .}{4 pt}{\periodafter}
\begin{document}
%%%%%%%%%%%%%

\title{Galton--Watson and branching process representations of the 
normalized Perron--Frobenius eigenvector}

\author{
\qquad Rapha\"el Cerf 
\footnote{
\noindent
D\'epartement de math\'ematiques et applications, Ecole Normale Sup\'erieure,
CNRS, PSL Research University, 75005 Paris.
\newline
Laboratoire de Math\'ematiques d'Orsay, Universit\'e Paris-Sud, CNRS, Universit\'e
Paris--Saclay, 91405 Orsay.}
\hskip 70pt Joseba Dalmau
\footnote{
Centre de Math\'ematiques appliqu\'ees, Ecole Polytechnique CNRS, 
Universit\'e Paris--Saclay, 91405 Orsay.}
}

\maketitle

%\centerline{Universit\'e Paris Sud and Institut Universitaire de France}

%\renewcommand{\abstractname}{}
%\vskip-57pt

\begin{abstract}
\noindent
%We consider the problem of finding the Perron--Frobenius
%eigenvector of a primitive matrix. Dividing each of the 
%rows of the matrix by the sum of the elements in the row,
%the resulting new matrix is stochastic.
%We give a formula for the normalized Perron--Frobenius eigenvector
%of the original matrix, in terms of a realization
%of the Markov chain defined by the associated stochastic matrix.
Let $A$ be a primitive matrix and let $\lambda$ be its Perron--Frobenius eigenvalue.
We give formulas expressing the associated normalized Perron--Frobenius eigenvector
as a simple functional of a multitype Galton--Watson process whose mean matrix is $A$,
as well as of a multitype branching process with 
mean matrix $e^{(A-I)t}$.
These formulas are generalizations of the classical formula
for the invariant probability measure of a Markov chain.
\noindent
\end{abstract}

%\part{Models and results}

%We propose here a version of the famous Eigen model for finite populations.
%Eigen's model is originally formulated for an infinite population and
%several authors have proposed versions of this model in finite populations.
%\vfill\eject
\allowdisplaybreaks[4]

\noindent
Let $A$ be a primitive matrix of size $N$,
i.e., a non--negative matrix whose $m$--th 
power is positive for some natural number $m$.
The Perron--Frobenius theorem (Theorem 1.1 in~\cite{SEN})
states that there 
exist a positive real number $\lambda$ and a 
vector $u$ with positive coordinates
such that
%$u=(u_1,\dots,u_N)$
%on the unit simplex $\lbrace\,x\in\R_+^N:x_1+\cdots+x_N=1\,\rbrace$ such that 
$u^TA=\lambda u^T$.
Moreover, the eigenvalue $\lambda$ is simple,
is larger in absolute value than any other eigenvalue of $A$,
and any non--negative eigenvector of $A$ is a multiple of $u$.
The eigenvalue $\lambda$ is the Perron--Frobenius eigenvalue of $A$ and $u$
is a Perron--Frobenius eigenvector of $A$.
The purpose of this note is to give probabilistic representations
of the normalized Perron--Frobenius eigenvector $u/|u|_1$, as a functional of a
multitype Galton--Watson process,
as well as of a multitype branching process.

\section{The Galton--Watson case}
A multitype Galton--Watson process is a Markov chain
$$Z_n\,=\,\big(Z_n(1),\dots,Z_n(N)\big)\,,\qquad n\geq 0\,,$$
with state space $\N^N$.
The number $Z_n(i)$ represents the number of individuals 
having type $i$ in generation $n$.
In order to build generation $n+1$ from generation $n$,
each individual of type $i$ present in generation $n$ produces a random number of offspring,
distributed according to a prescribed reproduction law, independently of the other individuals
and the past of the process.
The ensemble of all the offspring forms the generation $n+1$.
The null vector is an absorbing state.
For each $i\in\lbrace\,1,\dots,m\,\rbrace$,
we denote by $P_i$ and $E_i$ the probabilities and expectations
for the process started from a population consisting of a single individual of type $i$.
From now onwards, 
we consider 
a multitype Galton--Watson model 
$(Z_n)_{n\geq0}$
%%is a Markov chain with values in $\mathbb{N}^{E}$,
%$$Z_n\,=\,\big(
%Z_n(i),1\leq i\leq N
%\big)\,,\qquad n\geq0\,,$$
whose
mean matrix is equal to $A$, i.e., we suppose that
$$\forall\, i,j\in
\{\,1,\dots,N\,\}
\qquad
E_i\big( Z_1(j) \big)\,=\,
%	\frac{1}{\lambda}
A(i,j)\,.$$
%where $e(i)$ is the vector of $\R^N$ with the $i$--th 
%equal to~$1$, and the
%other coordinates equal to $0$.
%The component 
%$Z_n(i)$
%The mean matrix of the process $(Z_n)_{n\geq0}$
%is the matrix
%$\big(E_i\big(Z_1(j)\big)_{1\leq i,j\leq N}$.
%$$\forall\,i,j\in\lbrace\,1,\dots,m\,\rbrace\qquad
%A(i,j)\,=\,E_i\big(Z_1(j)\big)\,.$$
There exist 
intimate links between the asymptotic behavior of the 
Galton--Watson process
$(Z_n)_{n\geq0}$, the Perron--Frobenius eigenvalue $\lambda$, and the associated normalized
eigenvector $u$ of $A$.
For instance, the following classical result can be found in
Chapter 2 of~\cite{Harris}.
If the Perron--Frobenius eigenvalue $\lambda$
of $A$ is strictly larger than one, then the multitype Galton--Watson process
has a positive probability of survival.
Conditionally on the survival event,
the vector of proportions of the different types 
converges almost surely to $u$ when time goes to $\infty$,
%$u$ being the normalized Perron--Frobenius eigenvector of $A$, 
i.e., conditionally on the survival event, with probability one,
$$\forall\,i\in\lbrace\,1,\dots,N\,\rbrace\qquad
\lim_{n\to \infty}\,\frac{Z_n(i)}{Z_n(1)+\cdots+Z_n(N)}\,=\,u(i)\,.$$
In this note, we shall present a simpler formula linking the 
Galton--Watson process
$(Z_n)_{n\geq0}$ with $\lambda$ and $u$,
which works not only in the case $\lambda>1$,
but in the critical and subcritical cases also.
%We consider 
%a multitype Galton--Watson model 
%%is a Markov chain with values in $\mathbb{N}^{E}$,
%$$Z_n\,=\,\big(
%Z_n(i),1\leq i\leq N
%\big)\,,\qquad n\geq0\,,$$
%with mean matrix given by
%$$\forall\, i,j\in
%\{\,1,\dots,N\,\}
%\qquad
%E\big( Z_1(j)\,\big|\,Z_0=e(i) \big)\,=\,
%%	\frac{1}{\lambda}
	%A(i,j)\,.$$
%where $e(i)$ is the vector of $\R^N$ with the $i$--th 
%equal to~$1$, and the
%other coordinates equal to $0$.
%The component 
%$Z_n(i)$
%represents the number of individuals of type $i$ present 
%in the $n$--th generation
%$Z_n$ of the process.
%The above quantity corresponds to the average number of
%type $v$ children that an individual of type $u$ has.
%\begin{lemma}
%The mean matrix of the process is equal to the matrix $W$, i.e.,
%\end{lemma}
Let us fix $i\in 
\{\,1,\dots,N\,\} $.
We shall stop the process
$(Z_n)_{n\geq0}$ on the type $i$ by killing the descendants of individuals of type~$i$ in
any generation $n\geq 2$.
The resulting process is denoted by
$(Z^i_n)_{n\geq0}$.
Thus, in the stopped process
$(Z^i_n)_{n\geq0}$, the individuals reproduce as in the 
Galton--Watson process
$(Z_n)_{n\geq0}$, however
from generation~$1$ onwards, 
the individuals of type $i$ 
%do not reproduce any more.
do not have any descendants.
%We define next a new process
%$(X^v_n)_{n\geq0}$, in which, from generation~$1$ onwards, 
%the individuals of type $v$ do not produce offspring.
%\medskip
%
%\noindent
%{\bf Notation.}
We denote by $E_i$ the expectation for the process 
$(Z^i_n)_{n\geq0}$
starting with a population consisting
of one individual of type~$i$. Notice that this individual produces offspring as in 
the Galton--Watson process
$(Z_n)_{n\geq0}$, only individuals of type~$i$ belonging to the subsequent generations
are prevented from having offspring.
Finally, for $u=(u(1),\dots,u(N))$ a vector in $\R^N$, we define 
$$|u|_1 \,=\,|u(1)|+\cdots+|u(N)|\,.$$
%In particular, 
%$\big|Z^i_n|_1$ is equal to the size of $n$--th generation of the process
%$(Z^i_n)_{n\geq0}$.
%As usual,
%for a vector 
%$N\in\mathbb{N}^E$, we define
%$|N|_1$ as the sum of its components, i.e.,
%$$\forall N\in\mathbb{N}^E\qquad
%|N|_1\,=\,\sum_{u\in E}N(u)\,.$$
%and let $\tau_u$ be the time of the first return of the chain to $u$, defined by
%$$\tau_u\,=\,\inf\,\big\{\,n\geq 1: Z_n=u\,\big\}\,.$$
%\smallskip
%
%\noindent
%We give now a formula for $u$ in terms of $(Z^i_n)_{n\geq 0}$ and
%the Perron--Frobenius eigenvalue $\lambda$ of the matrix $A$.
%
\begin{theorem}
\label{mainth}
%Let $1\leq k\leq N$. 
The normalized Perron--Frobenius eigenvector $u$ of $A$ is given by the formula
$$\forall i\in\un\qquad
	u(i)\,=\,
\frac{1}{
\displaystyle \sum_{n\geq 1} 
	\lambda^{-n} %\sum_{1\leq j\leq N}
E_i \big( \big|Z_n^i\big|_1\big) 
}\,.
$$
\end{theorem}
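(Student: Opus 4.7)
The plan is to convert the series $S(i):=\sum_{n\geq 1}\lambda^{-n}E_i(|Z_n^i|_1)$ into the expected occupation functional of an excursion of an auxiliary scalar Markov chain, and then read off the answer from the Kac recurrence identity. For the first step, write $B_i$ for the matrix obtained from $A$ by setting its $i$-th row equal to zero. A direct unfolding of the stopped branching process gives, for every $n\geq 1$ and every $k\in\{1,\dots,N\}$,
\[
E_i\bigl(Z_n^i(k)\bigr)\,=\,\sum_{j_1,\dots,j_{n-1}\neq i}A(i,j_1)A(j_1,j_2)\cdots A(j_{n-1},k)\,=\,(AB_i^{n-1})(i,k),
\]
because the mean descendant matrix of the unkilled Galton--Watson process is $A^n$, while the killing prescription forbids the intermediate types $j_1,\dots,j_{n-1}$ from being $i$.

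Second, I pass to a Doob $h$-transform. Let $w$ denote the right Perron--Frobenius eigenvector, $Aw=\lambda w$, $w>0$. The matrix $Q(j,k):=A(j,k)w(k)/(\lambda w(j))$ is stochastic, irreducible and aperiodic by primitivity of $A$, and a short computation using $u^{T}A=\lambda u^{T}$ shows that $\pi(k):=u(k)w(k)/(u^{T}w)$ is its unique invariant probability. Along any path $i=j_0\to j_1\to\cdots\to j_n=k$ the weights $w$ telescope:
\[
\lambda^{-n}A(i,j_1)\cdots A(j_{n-1},k)\,=\,\frac{w(i)}{w(k)}\,Q(i,j_1)\cdots Q(j_{n-1},k),
\]
so summing first over intermediate paths with $j_1,\dots,j_{n-1}\neq i$, then over $k$, and finally over $n\geq 1$, yields
\[
S(i)\,=\,w(i)\,E_i^{Q}\!\left[\sum_{n=1}^{\tau_i^{+}}\frac{1}{w(Q_n)}\right],
\]
where $\tau_i^{+}$ is the first return time to $i$ under $Q$.

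Third, I apply the Kac occupation identity: for an ergodic finite chain with invariant probability $\pi$ and any non--negative function $f$,
\[
E_i^{Q}\!\left[\sum_{n=1}^{\tau_i^{+}}f(Q_n)\right]\,=\,\frac{1}{\pi(i)}\sum_k \pi(k)f(k).
\]
Taking $f=1/w$ collapses the right hand side to $\sum_k u(k)/(u^{T}w)=1/(u^{T}w)$ (using $|u|_1=1$), and dividing by $\pi(i)=u(i)w(i)/(u^{T}w)$ yields $1/(u(i)w(i))$. Multiplying by the prefactor $w(i)$ in the previous display cancels the remaining factors of $w$ and leaves $S(i)=1/u(i)$, which is the claimed formula.

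The key move is the $h$-transform in step two, which identifies the multitype branching-process series with a single-particle excursion functional; after that, the Kac formula does everything. Convergence of $S(i)$ is automatic because $\tau_i^{+}$ is almost surely finite with finite expectation on a finite irreducible state space, so no spectral estimate on $B_i$ is required at any point.
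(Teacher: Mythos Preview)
Your proof is correct and takes a genuinely different route from the paper's.

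The paper proceeds by defining the vector $v(j)=\sum_{n\geq 1}\lambda^{-n}E_i(Z_n^i(j))$ and checking two things directly: first, that $(vA)(j)=\lambda v(j)$ for all $j$, via a one--line branching computation $\sum_{k\neq i}Z_n^i(k)A(k,j)=E(Z_{n+1}^i(j)\mid Z_n^i)$; second, that $v(i)=1$, by recognising $v(i)$ as the cycle sum $\sum_{n\geq 1}\lambda^{-n}\sum_{i_1,\dots,i_{n-1}\neq i}A(i,i_1)\cdots A(i_{n-1},i)$ and invoking a separate lemma (from~\cite{CD}) that this sum equals~$1$. Normalising then gives the theorem.

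Your argument instead introduces the right eigenvector $w$ and the Doob transform $Q(j,k)=A(j,k)w(k)/(\lambda w(j))$, telescopes the path weights to turn $S(i)$ into the excursion functional $w(i)\,E_i^Q\bigl[\sum_{n=1}^{\tau_i^+}1/w(Q_n)\bigr]$, and then reads off the answer from the Kac occupation identity. What this buys you: the external lemma on the cycle sum is absorbed into the classical Kac formula (indeed, the cycle sum equalling~$1$ is exactly the statement that the $Q$--chain returns to $i$ almost surely), and finiteness of $S(i)$ comes for free from positive recurrence on a finite state space rather than from a spectral bound on $B_i$. The cost is that you must bring in the right eigenvector $w$, which the paper's bare--hands verification avoids entirely. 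Your approach also makes transparent why the formula specialises to the classical invariant--measure identity when $A$ is stochastic: in that case $w\equiv 1$, $Q=A$, and the $h$--transform is trivial.
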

%By taking $i=k$ in the above formula we obtain the following corollary.
%\begin{corollary}
%The normalized Perron--Frobenius eigenvector $u$ of $A$
%is given by the formula
%$$\forall k\in\un\qquad
%u_k\,=\,
%\,.
%$$
%\end{corollary}
Notice that $\big|Z^i_n|_1$ 
is simply 
the size of $n$--th generation of the process
$(Z^i_n)_{n\geq0}$.
In the case where $\lambda\geq 1$, 
%we can proceed as in the case of the stopped random walk
the factor $\lambda^{-n}$ is naturally
interpreted as a killing probability.
We introduce a random clock $\tau_\lambda$,
independent of the branching process
$(Z_n)_{n\geq0}$, and distributed according to the geometric law of parameter $1-1/\lambda$:
$$\forall n\geq 1\qquad P(\tau_\lambda\geq n)\,=\,\Big(\frac{1}{\lambda}\Big)^{n-1}\,.$$
The formula presented in the theorem can then be rewritten as
\begin{equation*}
\label{pfrepp4}
\forall i\in E\qquad
	u(i)\,=\,\frac{1}{
%E_u\Bigg(\sum_{n=0}^{\tau_u\wedge\tau_\lambda-1}
	\displaystyle
E_i\Bigg(\sum_{n=1}^{\tau_\lambda-1}
 \big|Z_n^i\big|_1
\Bigg)
}
\,.
\end{equation*}
The nicest situation is when the Perron--Frobenius eigenvalue is equal to one. In this case, 
the formula becomes
$$\forall i\in\un\qquad
	u(i)\,=\,
\frac{1}{
\displaystyle 
	%\sum_{1\leq j\leq N}
E_i \Big( 
\sum_{n\geq 1} 
\big|Z_n^i\big|_1\Big) 
}\,.
$$
The denominator is naturally interpreted as the expected number of descendants from an individual of
type $i$, if the descendants of type $i$ are forbidden to reproduce.
Let us remark also that, by multiplying the matrix $W$ by a constant factor, we can adjust the 
value of the Perron--Frobenius eigenvalue without altering the Perron--Frobenius eigenvector.
More precisely, suppose that the mean matrix of the Galton--Watson process is given by
$$\forall\, i,j\in
\{\,1,\dots,N\,\}
\qquad
E_i\big( Z_1(j) \big)\,=\,
c\,A(i,j)\,,$$
where $c$ is a positive constant.
If we take $c=1/\lambda$, 
then we obtain indeed a critical branching process and the Perron--Frobenius eigenvalue is~$1$.
In practice, the exact value of the Perron--Frobenius eigenvalue might be unknown, so we can
simply choose a value $c$ large enough so that the 
Perron--Frobenius eigenvalue becomes larger than one, and we can introduce the random killing clock
as above.
%where $e(i)$ is the vector of $\R^N$ with the $i$--th 
%equal to~$1$, and the

In the particular case where the matrix $A$ is a stochastic matrix, and each individual produces exactly
one child, 
the Perron--Frobenius
eigenvalue $\lambda$ is equal to $1$ and the 
process 
$(Z_n)_{n\geq0}$ is simply a Markov chain with transition matrix $A$.
The stopped process
$(Z^i_n)_{n\geq0}$ is the Markov chain stopped at the time $\tau_i$ of the
first return to~$i$. So, in this situation, the population $Z^i_n$ has size $1$ until time $\tau_i$
and $0$ afterwards, therefore
	$$
\displaystyle \sum_{n\geq 1} 
\lambda^{-n} 
E_i \big( \big|Z_n^i\big|_1\big) 
	\,=\,
E_i\Bigg(\sum_{n\geq 1}^{\tau_i}
1
\Bigg)
\,=\,
E_i(\tau_i)
$$
and we recover the classical formula for the invariant probability measure of a Markov chain.
%and $u$ corresponds to the invariant probability measure of the 
%Markov chain. Thus, the formula of the corollary becomes 
%the well--known formula 
%$$\forall k\in\un\qquad
%u_k\,=\,\frac{1}{
%\displaystyle
%E_k(\tau_k)
%}\,.$$
%To prove proposition~\ref{pfrep3}, we shall in fact prove 
%the following more general formula.
%Let us fix $u\in E$. The Perron--Frobenius eigenvector $c^*$ of $W$ 
%is given by the formula
%\begin{equation}
%\label{pfrep4}
%\forall v\in E\qquad
%c^*(v)\,=\,\frac{
%\displaystyle \sum_{n\geq 1} 
%\lambda^{-n} %\sum_{v\in E}
%E_u \big( X_n^u(v) \big) 
%}
%{
%\displaystyle \sum_{n\geq 1} 
%\lambda^{-n} \sum_{w\in E}
%E_u \big( X_n^u(w) \big) 
%}\,.
%\end{equation}
%The formula of
%proposition~\ref{pfrep3} is then obtained by taking $u=v$ in
%formula~\eqref{pfrep4}.
%We prove now 
%formula~\eqref{pfrep4}.
%

Let us come to the proof of the theorem.
%~\ref{mainth}.
%The theorem~\ref{mainth} is in fact a consequence of the following proposition.
The theorem is in fact a consequence of the following proposition.
\begin{proposition}
	\label{mainpr}
Let $i\in 
\{\,1,\dots,N\,\} $.
The vector $v$ defined by
$$
\forall j\in \{\,1,\dots,N\,\} 
	\qquad
	v(j)\,=\,
\displaystyle \sum_{n\geq 1} 
\lambda^{-n} %\sum_{v\in E}
E_i \big( Z_n^i(j) \big) 
$$
	is the Perron--Frobenius eigenvector of $A$ satisfying $v(i)=1$.
\end{proposition}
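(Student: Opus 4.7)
The plan is to show that $v$ is the unique solution of a non-homogeneous linear system that is also satisfied by the rescaled Perron--Frobenius eigenvector $u/u(i)$, and then conclude $v=u/u(i)$ by invertibility.

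First I would isolate the matrix governing the killed process from generation $1$ onwards. Let $\widehat A$ denote the matrix obtained from $A$ by replacing its $i$-th row with zeros. Since the ancestor of type $i$ reproduces under the full law $A$, while every subsequent individual of type $i$ is sterile, the branching property yields
$$E_i\big(Z_n^i(j)\big)\;=\;\big(e_i^{T} A\, \widehat A^{n-1}\big)(j) \qquad \text{for all } n\ge 1.$$
Primitivity of $A$ forces the $i$-th row of $A$ to be non-zero, hence $\widehat A\le A$ with $\widehat A\ne A$; the strict Perron--Frobenius comparison theorem for irreducible non-negative matrices then yields $\rho(\widehat A)<\lambda$. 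In particular the series defining $v$ converges absolutely and the matrix $\lambda I-\widehat A^{T}$ is invertible.

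Next I would derive the equation satisfied by $v$. Starting from the branching recursion $E_i\big(Z_{n+1}^i(j)\big)=\sum_{k}\widehat A(k,j)\,E_i\big(Z_n^i(k)\big)$, valid for $n\ge 1$, I would multiply by $\lambda^{-n-1}$, sum over $n\ge 1$, and shift the index on the left, obtaining
$$v(j)-\lambda^{-1}E_i\big(Z_1^i(j)\big)\;=\;\lambda^{-1}\big(\widehat A^{T} v\big)(j).$$
Since $E_i(Z_1^i(j))=A(i,j)=:a(j)$, this rearranges to $(\lambda I-\widehat A^{T})\,v=a$. The same equation is satisfied by $v':=u/u(i)$: dividing the eigenvector identity $\sum_k A(k,j)u(k)=\lambda u(j)$ by $u(i)>0$, and isolating the $k=i$ term using $v'(i)=1$, one reads off $\lambda v'(j)=\big(\widehat A^{T} v'\big)(j)+A(i,j)$, equivalently $(\lambda I-\widehat A^{T})\,v'=a$. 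Invertibility of $\lambda I-\widehat A^{T}$ then gives $v=v'$, which proves the proposition (and in particular $v(i)=1$).

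The main bookkeeping obstacle is keeping proper track of the special status of the ancestor. The killing rule applies only from generation $1$ onwards, so the type-$i$ ancestor reproduces under the unmodified matrix $A$; it is this asymmetry that produces the inhomogeneous forcing term $a$ in the linear equation and, ultimately, that allows $v$ to match $u/u(i)$ on the nose rather than only up to a scalar. Once the setup is correct, the rest of the proof reduces to routine linear algebra.
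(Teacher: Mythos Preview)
Your argument is correct. Both you and the paper arrive at the same core identity $\lambda v(j)=A(i,j)+\sum_{k\neq i}v(k)A(k,j)$, i.e.\ $(\lambda I-\widehat A^{T})v=a$, via the one-step branching recursion for the stopped process. The divergence is in how the normalization $v(i)=1$ is obtained. The paper proves $v(i)=1$ directly, by expanding $v(i)$ as the cycle sum $\sum_{n\ge 1}\lambda^{-n}\sum_{i_1,\dots,i_{n-1}\neq i}A(i,i_1)\cdots A(i_{n-1},i)$ and invoking an external lemma (from~\cite{CD}) that evaluates this sum to $1$; the eigenvector relation $vA=\lambda v$ then follows by adding the $k=i$ term back in. You instead observe that the rescaled Perron--Frobenius vector $u/u(i)$ satisfies the very same inhomogeneous system, and conclude $v=u/u(i)$ from the invertibility of $\lambda I-\widehat A^{T}$ (which you get from $\rho(\widehat A)<\lambda$). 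Your route is self-contained---no cycle identity is needed---and it delivers $v(i)=1$ as a by-product rather than as a separate computation; the paper's route, on the other hand, yields $v(i)=1$ as an explicit combinatorial fact about weighted first-return sums, which has independent interest.
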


Indeed, the formula appearing in the theorem is obtained by normalizing the above vector.
We now proceed to the proof of the proposition.
\smallskip

\begin{proof}
We fix $i\in \{\,1,\dots,N\,\} $ and we define a vector $v$ via the formula stated in
the proposition.
%, for $v\in E$,
	Let us examine first $v(i)$. By definition of the 
stopped process
$(Z^i_n)_{n\geq0}$, we have
	$$ v(i) \,=\,
\sum_{n\geq 1}
\sum_{i_1,\dots,i_{n-1}\neq i}
\lambda^{-n}A(i,i_1)\cdots A(i_{n-1},i)\,.
$$
The Lemma of \cite{CD} yields that
	the above sum is equal to~$1$, thus $v(i)=1$.
%Obviously, the vector $(y(u))_{u\in E}$ is non--null and its components are non--negative. 
Let next
$j$ belong to $\{\,1,\dots,N\,\}$.
We have
	$$
	(vA)(j)\,=\,
	\sum_{1\leq k\leq N} v(k) A(k,j)
	\,=\, A(i,j)+
	\sum_{\genfrac{}{}{0pt}{1}{ 1\leq k\leq N }{ k\neq  i }}
	v(k) A(k,j)\,.
	$$
We compute next
\begin{align*}
	\sum_{\genfrac{}{}{0pt}{1}{ 1\leq k\leq N }{ k\neq  i }}
	v(k) A(k,j)
	&\,=\, 
	\sum_{\genfrac{}{}{0pt}{1}{ 1\leq k\leq N }{ k\neq  i }}
\sum_{n\geq 1} 
\lambda^{-n} 
E_i \big( Z_n^i(k) \big) A(k,j)
	\cr
	&\,=\, 
\sum_{n\geq 1} 
\lambda^{-n} 
E_i \Big( 
	\sum_{\genfrac{}{}{0pt}{1}{ 1\leq k\leq N }{ k\neq  i }}
	Z_n^i(k) 
	A(k,j)
	\Big) 
	\cr
	&\,=\, 
\sum_{n\geq 1} 
\lambda^{-n} 
E_i \bigg( 
	E\Big(Z_{n+1}^i(j)\,\Big|\,Z_n^i\Big)
	%\sum_{w\in E\setminus\{u\}} X_n^u(w) W(w,v)
	\bigg) 
	\cr
	&\,=\, 
\sum_{n\geq 1} 
\lambda^{-n} 
	E_i\big(Z_{n+1}^i(j)\big)
	%\sum_{w\in E\setminus\{u\}} X_n^u(w) W(w,v)
	\cr
	&\,=\, \lambda v(j) -
	E_i\big(Z_{1}^i(j)\big)\,.
\end{align*}
Remember that the initial individual of type~$i$ reproduces as in the Galton--Watson process
$(Z_n)_{n\geq0}$, therefore
	$E_i\big(Z_{1}^i(j)\big)=A(i,j)$ and putting together the previous computations, we obtain
	$$\forall j\in \{\,1,\dots,N\,\} 
	\qquad
	(vA)(j)
	\,=\, 
	%\big(y(u)-1\big) W(u,v)+
	\lambda v(j) 
	\,.$$
	Since in addition $v(i)=1$, we conclude that 
	all the components of $v$ are positive and finite, 
	therefore $v$ is a left Perron--Frobenius eigenvector of $A$,
	as wanted.
%We normalize it to obtain $u$, and we obtain
%\begin{multline*}
%u(i)\,=\,\frac{v(i)}{ 
%\displaystyle 
%\sum_{1\leq j\leq N} v(j) 
%}
%\,=\,\frac{1}{
%\displaystyle 
%\sum_{1\leq j\leq N} 
%\sum_{n\geq 1} 
%\lambda^{-n} 
%E_i \big( Z_n^i(j) \big)} 
%\cr
%\,=\,\frac{1}{
%\displaystyle 
%\sum_{n\geq 1} 
%\lambda^{-n} 
%E_i \Big( 
%\sum_{1\leq j\leq N} 
%Z_n^i(j) \Big)} 
%\,.
%\end{multline*}
%Taking into account that $y(u)=1$ and the expression of $y(v)$, we see that
%This is the desired formula.
\end{proof}

\section{The branching process case}
A multitype branching process is a 
continuous--time
Markov process 
$$Z_t\,=\,\big(
Z_t(1),\dots,Z_t(N)
\big)\,,\quad t\geq 0\,,$$
with state space $\N^N$.
The number $Z_t(i)$ represents 
the number of individuals carrying the type $i$
at time $t$. 
Individuals reproduce independently of each other,
at a rate dependent on their type. When an individual reproduces,
it gives birth to a random number of offspring, distributed
according to a prescribed reproduction law, independently of 
the other individuals and the past of the process.
The null vector is an absorbing state.
For each $i\in\um$ , we denote by $P_i$ and $E_i$
the probabilities and and expectations 
for the process started from a population
consisting of a single individual of type $i$.
We consider a multitype branching process 
whose mean matrix has generator $A-I$,
in other words, we suppose that
$$\forall\,i,j\in\un\quad \forall t\geq 0\qquad
E_i\big(
Z_t(j)
\big)\,=\,\big(e^{(A-I)t}\big)(i,j)\,,$$
where the exponential appearing in the formula
is the matrix exponential.
This mean matrix corresponds to the process
where individuals reproduce at rate 1,
and the reproduction laws are the same as in
the discrete Galton--Watson process.
There exist well--known links between the asymptotic
behavior of the branching process $(Z_t)_{t\geq0}$,
the Perron--Frobenius eigenvalue $\lambda$,
and the associated eigenvector $u$ of $A$,
analogous to those of the Galton--Watson case.
We shall next present a simple formula 
in the spirit of theorem~\ref{mainth}.
Let us fix $i\in\un$.
We will stop the process $(Z_t)_{t\geq0}$
on the type $i$ by killing the descendants of type $i$
at any time $t\geq0$.
The resulting process is denoted by $(Z_t^i)_{t\geq0}$.
We denote by $E_i$ the expectation for the process $(Z_t^i)_{t\geq0}$
starting from a random population,
drawn according to the original reproduction law
of an individual of type $i$, so that
$E_i(Z^i_0(j))=A(i,j)$.
All individuals of type $i$ die at rate 1
without producing offspring.
\begin{theorem}
The normalized Perron--Frobenius eigenvector
$u$ of $A$ is given by the formula
$$\forall\,i\in\un\qquad
u(i)\,=\,\frac{1}{\displaystyle
\int_0^\infty e^{-(\lambda-1) t}E_i\big(
\big|Z_t^i\big|_1
\big)\,dt}$$
\end{theorem}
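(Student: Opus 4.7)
The plan is to parallel the discrete-time argument by introducing the analogue of Proposition~\ref{mainpr}: for each fixed $i\in\{1,\dots,N\}$, the vector
\[
v(j) \,=\, \int_0^\infty e^{-(\lambda-1)t}\,E_i\big(Z_t^i(j)\big)\,dt,\qquad j\in\{1,\dots,N\},
\]
is a left Perron--Frobenius eigenvector of $A$ satisfying $v(i)=1$, so that the theorem follows by dividing by $|v|_1$.

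My first step would be to write the mean equation for the row vector $m(t) := E_i(Z_t^i)$. Let $A_i$ denote the matrix obtained from $A$ by setting its $i$-th row to zero. Because type-$k$ individuals with $k\neq i$ die at rate $1$ and are replaced by offspring of mean $A(k,\cdot)$, while type-$i$ individuals die at rate $1$ without replacement, $m(t)$ satisfies the linear ODE
\[
m'(t)\,=\,m(t)(A_i-I),\qquad m(0)\,=\,A(i,\cdot).
\]
Since $A$ is primitive, zeroing one of its rows strictly decreases the Perron eigenvalue, so the spectral abscissa of $A_i-I$ lies strictly below $\lambda-1$. This ensures that $e^{-(\lambda-1)t}\,m(t)$ decays exponentially, and hence that $v$ is well defined in every coordinate.

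The second step would be a Laplace-transform reduction to the discrete case. Integrating the ODE against $e^{-(\lambda-1)t}$ and performing one integration by parts gives $-m(0)+(\lambda-1)v = v(A_i-I)$, which rearranges to the closed form
\[
v \,=\, A(i,\cdot)\,(\lambda I-A_i)^{-1} \,=\, \sum_{n\geq 0} \lambda^{-n-1}\,A(i,\cdot)\,A_i^n.
\]
Expanding these powers componentwise and using that the $i$-th row of $A_i$ is zero, this sum reorganizes into
\[
v(j) \,=\, \sum_{n\geq 1}\lambda^{-n}\sum_{i_1,\dots,i_{n-1}\neq i} A(i,i_1)A(i_1,i_2)\cdots A(i_{n-1},j),
\]
which is exactly the vector appearing in Proposition~\ref{mainpr}. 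Hence all the conclusions of that proposition transfer immediately to the present vector: $v$ has positive finite coordinates, $v(i)=1$, and $vA=\lambda v$.

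The main obstacle is the spectral-gap fact for $A_i$, which is the technical ingredient underlying both the integrability of $v$ and the convergence of the Neumann series. Once this is in place, the continuous-time theorem is nothing but the Laplace transform of the discrete-time identity already established, and no new combinatorial work is required.
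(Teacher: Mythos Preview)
Your argument is correct and follows essentially the same route as the paper: the same mean ODE for $m(t)=E_i(Z_t^i)$, the same integration by parts, and the same spectral-gap fact for the matrix $A_i$ with the $i$-th row zeroed. The only difference is one of packaging: after obtaining $v(\lambda I-A_i)=A(i,\cdot)$ you invert via the Neumann series and recognize the result as exactly the discrete-time vector of Proposition~\ref{mainpr}, whereas the paper keeps the relation $\lambda v(k)=A(i,k)+\sum_{j\neq i}v(j)A(j,k)$ and re-iterates it to recover $v(i)=1$ from the same cycle-sum identity. Your Laplace-transform identification is slightly cleaner in that it avoids repeating that iteration, but the underlying ingredients are identical.
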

As for the Galton--Watson case,
this result is a direct consequence of the 
following proposition.
\begin{proposition}
	\label{brmain}
Let $i\in 
\{\,1,\dots,N\,\} $.
The vector $v$ defined by
$$
\forall j\in \{\,1,\dots,N\,\} 
	\qquad
	v(j)\,=\,
\int_0^\infty e^{-(\lambda-1) t}E_i\big(
Z^i_t(j)
\big)\,dt
$$
is the Perron--Frobenius eigenvector of $A$ satisfying $v(i)=1$.
\end{proposition}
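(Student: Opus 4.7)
The strategy is to mirror the Galton--Watson proof of Proposition~\ref{mainpr} in continuous time, using the Kolmogorov forward equation and integration by parts in place of the telescoping sum over generations, and reducing the scalar identity $v(i)=1$ to the same Lemma of~\cite{CD}.

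Concretely, let $f_t=E_i\big(Z^i_t\big)$ viewed as a row vector. The killing rule translates into the forward equation $f'_t=f_t B$ with initial condition $f_0=A(i,\cdot)$, where
$$B\,=\,A-I-e_i\,A(i,\cdot)\,,$$
reflecting the fact that type-$i$ individuals die at rate $1$ without offspring while non--$i$ individuals follow the usual branching rule. I would multiply this ODE by $e^{-(\lambda-1)t}$, integrate on $[0,\infty)$, and perform one integration by parts; the resulting row-vector identity is
$$v\,B\,=\,-A(i,\cdot)+(\lambda-1)\,v\,,$$
and after substituting the expression for $B$ and rearranging, it becomes
$$v\,A\,=\,\lambda\,v+\big(v(i)-1\big)A(i,\cdot)\,.$$
So the proposition reduces to the scalar identity $v(i)=1$.

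To establish $v(i)=1$, I would expand $E_i\big(Z^i_t(i)\big)$ by ancestry analysis. Because type-$i$ individuals do not reproduce, every type-$i$ individual present at time $t$ sits at the tip of a chain $i\to i_0\to i_1\to\cdots\to i_{m-1}\to i$ in which the intermediate types $i_0,\dots,i_{m-1}$ are all different from~$i$. Integrating against the Erlang density of the $m$ successive birth times and the final survival factor, then against $e^{-(\lambda-1)t}$, and using the elementary identity $\int_0^\infty t^m e^{-\lambda t}\,dt=m!/\lambda^{m+1}$, one collapses $v(i)$ into exactly the path series
$$v(i)\,=\,\sum_{n\geq 1}\lambda^{-n}\sum_{i_1,\dots,i_{n-1}\neq i}A(i,i_1)\cdots A(i_{n-1},i)\,,$$
whose value is equal to~$1$ by the Lemma of~\cite{CD}. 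Plugging this back, $v\,A=\lambda\,v$, and since $v$ is componentwise non-negative with $v(i)=1>0$, the Perron--Frobenius theorem identifies it as the left Perron--Frobenius eigenvector normalized so that $v(i)=1$.

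The main technical obstacle is justifying the integration by parts, which requires $e^{-(\lambda-1)t}f_t\to 0$ as $t\to\infty$. This is where the structure of $B$ enters: $B$ is block lower triangular with respect to the partition $\{i\}$ versus its complement, with diagonal blocks $(-1)$ and $A'-I$, where $A'$ is the principal submatrix of $A$ obtained by deleting row and column $i$. Since $A$ is primitive, $A'$ has Perron--Frobenius eigenvalue strictly less than $\lambda$, so the spectral abscissa of $B$ lies strictly below $\lambda-1$ and the required decay follows.
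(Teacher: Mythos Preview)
Your proof is correct and follows essentially the same route as the paper: both write down the forward equation for $E_i(Z^i_t)$, multiply by $e^{-(\lambda-1)t}$ and integrate by parts, kill the boundary term via the spectral gap between $\lambda$ and the matrix with the $i$-th row zeroed (the paper invokes part~(e) of Theorem~1.1 of~\cite{SEN}, your block--triangular argument with $A'$ is the same fact), and thereby obtain the identity $\lambda v(k)=A(i,k)+\sum_{j\neq i}v(j)A(j,k)$, which is exactly your $vA=\lambda v+(v(i)-1)A(i,\cdot)$ in vector form.

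The only genuine difference is in the verification of $v(i)=1$. The paper proceeds algebraically: it iterates the identity above on the $v(j)$'s, producing a partial path series plus a remainder $\lambda^{-n}\sum_{i_1\neq i}v(i_1)B^n(i_1,i)$, and uses the spectral bound a second time to send the remainder to~$0$. You instead expand $E_i(Z^i_t(i))$ directly over ancestry chains and integrate term by term using $\int_0^\infty t^m e^{-\lambda t}\,dt=m!/\lambda^{m+1}$; this is equivalent to expanding the matrix exponential $e^{(B-I)t}$ as a power series and applying Fubini (everything is non-negative), so no second remainder estimate is needed. Both computations land on the same series $\sum_{n\geq 1}\lambda^{-n}\sum_{i_1,\dots,i_{n-1}\neq i}A(i,i_1)\cdots A(i_{n-1},i)$ and finish with the Lemma of~\cite{CD}.
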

\begin{proof}
As in the discrete case,
we do the proof by verifying that the vector $v$
is indeed an eigenvector of $A$.
Let $k\in\un$,
and let us start by computing the integral
involved in the definition of $v(k)$.
Differentiating the expectation with 
respect to $t$ yields
$$\frac{d}{dt}E_i\big(
Z^i_t(k)
\big)\,=\,\sum_{\genfrac{}{}{0pt}{1}{ 1\leq j\leq N }{ j\neq  i }}
E_i\big(
Z^i_t(j)
\big)
A(j,k)-
E_i\big(
Z^i_t(k)
\big)\,.
$$
Thus, integrating by parts,
for any $T>0$,
\begin{multline*}
\!\!\!\!\!\int_0^T e^{-(\lambda-1) t}E_i\big(
Z^i_t(k)
\big)\,dt
\,=\,
-\frac{1}{\lambda-1}e^{-(\lambda-1) T}E_i\big(
Z^i_T(k)
\big)+\frac{1}{\lambda-1}E_i\big(
Z^i_0(k)
\big)
\\+
\frac{1}{\lambda-1}\int_0^T e^{-(\lambda-1) t}\bigg(
\sum_{\genfrac{}{}{0pt}{1}{ 1\leq j\leq N }{ j\neq  i }}
E_i\big(
Z^i_t(j)\big)A(j,k)-
E_i\big(
Z^i_t(k)\big)\bigg)\,dt\,.
\end{multline*}
Let $B$ be the matrix obtained from $A$
by filling with zeros its $i$--th row.
The first expectation on the right hand--side
can be rewritten as
$$E_i\big(
Z^i_t(k)
\big)\,=\,\sum_{1\leq j\leq N}A(i,j)\big(
e^{(B-I)t}
\big)(j,k)\,.$$
Yet it follows from part (e) of theorem 1.1
of~\cite{SEN} that the spectral radius of $B$
is strictly less than $\lambda$. Therefore, when $t$ goes to infinity,
the matrix exponential $e^{(B-I)t}$
behaves as $e^{\mu't}$, for some $\mu'$ strictly smaller than $\lambda-1$.
Sending $T$ to infinity in the above integrals we obtain the 
following identity:
%$$v(k)\,=\,\frac{A(i,k)}{\mu}+
%\frac{1}{\mu}
%\sum_{\genfrac{}{}{0pt}{1}{ 1\leq j\leq N }{ j\neq  i }}
%v(j)A(j,k)-\frac{v(k)}{\mu}\,.
%$$
%Recalling that $\lambda = \mu+1$ we conclude that
$$\lambda v(k)\,=\,A(i,k)+
\sum_{\genfrac{}{}{0pt}{1}{ 1\leq j\leq N }{ j\neq  i }}
v(j)A(j,k)\,.
$$
Thus, the proof will be achieved if we manage to show that 
$v(i)=1$.
Yet, the previous formula holds for $k=i$ too,
and we may use it recurrently over $v(j)$
in order to get, for any $n\geq 1$,
\begin{multline*}
v(i)\,=\,
\sum_{t=1}^{n} \frac{1}{\lambda^n}
\sum_{i_1,\dots,i_t\neq  i}
A(i,i_1)\cdots A(i_t,i)
\\+\frac{1}{\lambda^n}
\sum_{i_1,\dots,i_{n+1}\neq  i}
v(i_1)A(i_1,i_2)\cdots A(i_{n+1},i)\,.
\end{multline*}
Again, calling $B$ the matrix obtained from $A$
by filling with zeros its $i$--th row, 
the last term can be written as
$$\frac{1}{\lambda^n}
\sum_{\genfrac{}{}{0pt}{1}{ 1\leq i_1\leq N }{ i_1\neq  i }}
v(i_1)B^n(i_1,i),$$
which converges to 0 when $n$ goes to $\infty$.
Thus,
$$ v(i) \,=\,
\sum_{n\geq 1}
\sum_{i_1,\dots,i_{n-1}\neq i}
\lambda^{-n}A(i,i_1)\cdots A(i_{n-1},i)\,.
$$
This last quantity is equal to 1, as shown in the lemma of~\cite{CD}.
\end{proof}

\bibliographystyle{amsplain}
%%\chapterspace{-3}
\bibliography{gwre}
 \thispagestyle{empty}
%\vfill\eject
%\tableofcontents

\end{document}